\newtheorem{theorem}{Theorem}[section]
\newtheorem{lemma}[theorem]{Lemma}
\newtheorem{corollary}[theorem]{Corollary}
\theoremstyle{definition}
\newtheorem{problem}[theorem]{Problem}
\theoremstyle{remark}
\numberwithin{equation}{section}
\begin{document}

\title{On the ill-posed Cauchy problem  for the polyharmonic heat equation}

\begin{abstract}
We consider 
the ill-posed Cauchy problem for the polyharmonic heat equation on  recovering 
a function, satisfying the equation $(\partial _t + (- \Delta)^m) u=0$ in a cylindrical domain in the half-space ${\mathbb R}^n \times [0,+\infty)$, where $n\geq 1$,
$m\geq 1$ and $\Delta$ is the Laplace operator,  via its values and 
the values of its normal derivatives up to order $(2m-1)$ on a given part of the lateral 
surface of the cylinder. We obtain a  
Uniqueness Theorem for the problem and a criterion of its solvability in terms of the 
real-analytic continuation of parabolic potentials, associated with the Cauchy data. 
\end{abstract}

\author[I. Kurilenko]{Ilya Kurilenko}
\email{ilyakurq@gmail.com}

\author[A. Shlapunov]{Alexander Shlapunov}
\email{ashlapunov@sfu-kras.ru}

\address{Siberian Federal University,
         Institute of Mathematics and Computer Science,
         pr. Svobodnyi 79,
         660041 Krasnoyarsk,
         Russia}

\subjclass [2010] {Primary 35K25; Secondary 35R25}

\keywords{the polyharmonic heat equation, ill-posed problems, integral 
representation's method.}

\maketitle

\section*{Introduction}

In this short note we continue to investigate the ill-posed Cauchy problem for 
parabolic operators in various function spaces, see \cite{PuSh12}, \cite{PuSh15}
for the second order operators in the H\"older spaces, or \cite{MMT17}, \cite{KuSh2019}, \cite
{VKuSh2022} for the second order operators in the anisotropic Sobolev spaces. 
Actually the general schemes related to investigation of the ill-posed Cauchy problem for 
elliptic operators (see \cite{Lv1,LRSh,TiAr} for the second order operators or 
\cite{A,AKy} for the Cauchy-Riemann system in one and many complex variables or 
\cite{ShTaLMS,Tark36} for general elliptic operators with the unique continuation property) 
are still applicable in this new situation. 

In the present paper we concentrated our efforts on the solvability criterion of the  
ill-posed Cauchy problem for a simple class of Petrovsky  $2m$-parabolic partial 
differential operators 
\begin{equation} \label{eq.heat.0}
(\partial _t + (- \Delta)^m), 
\end{equation} 
 where $m\geq 1$ and $\Delta$ is the Laplace operator in ${\mathbb R}^n$, $n\geq 1$, 
that are often called polyharmonic heat 
operators, see \cite[Ch.2, \S 1]{eid}, \cite{rep+eid}.  Namely the problem consists of the  
recovering a function, satisfying the equation 
$(\partial _t + (- \Delta)^m) u =0$  
in a cylindrical domain in the half-space ${\mathbb R}^n \times [0,+\infty)$, 
via its values and the values of its normal derivatives up to order $(2m-1)$ on a 
given part of the lateral surface of the cylinder. The crucial difference between the 
heat equation (or the parabolic Lam\'e system) and the polyharmonic heat equation 
is the fact that the fundamental solution of the polyharmonic heat operator 
is given by a non-elementary function. The situation resembles somehow the 
matter with the fundamental solutions to the Helmholtz operator $\Delta + c^2_0$: 
for $n=3$ it is given by $\frac{-  e^{\pm \iota c_0|x|}}{4\pi |x|}$ (here $\iota $ is the 
imaginary unit) while for $n=2$ it is represented by the Hankel functions of the second kind 
(actually, some versions of the Bessel functions),  see, \cite[Ch. III, \S 11]{Vla}. Of 
course, it is not a surprise, because after an application of the Laplace  transform 
$ L$ with respect to the variable $t$ (if applicable) to \eqref{eq.heat.0}, 
one arrives at the parameter depending elliptic equation 
\begin{equation} \label{eq.heat.0.F}
(\iota \tau + (- \Delta)^m) L (u) =0, 
\end{equation} 
coinciding with the Helmholtz equation for $m=1$ regarding the generalized 
function $L (u)$ as an unknown and $\tau$ as a real parameter. Actually, this seemingly 
simple approach, reducing the parabolic equations to elliptic ones, is known for decades, 
see \cite{AV64}. It gives a lot of qualitative information on the connection 
between the corresponding solutions of the differential equations of different kinds. 
However one needs very delicate properties of the 
Laplace transform in order to obtain really useful formulas solving the parabolic 
problems with the use of elliptic theory, see for instance, \cite{MMT17} for 
the  heat equation and 
the related remark on properties of the Laplace transform \cite{ChLyTa}.
Thus, we will act in the framework of mentioned above scheme invented by L. Aizenberg 
and developed in \cite{ShTaLMS}.

\section{Preliminaries}

\noindent Let $\Omega$ be a bounded domain  in $n$-dimensional linear space 
${\mathbb R}^n$ with the coordinates $x=(x_1, \dots , x_n)$. As usual we denote by 
$\overline{\Omega}$ the closure of $\Omega$, and we denote  by $\partial\Omega$ its boundary.
In the sequel we assume that $\partial \Omega$ is piece-wise smooth. 
We denote by  $\Omega_T$ the bounded open cylinder $\Omega \times (0,T)$
in ${\mathbb R}^{n+1}$ with a positive altitude $T$. 
 Let also $\Gamma\subset 
\partial\Omega$ be a non empty connected relatively open subset 
of $\partial\Omega$. Then $\Gamma_T = \Gamma\times(0,\ T)$ and 
$\overline{\Gamma_T}=\overline{\Gamma} \times[0,\ T]$.

We consider the  functions over subsets in ${\mathbb R}^n$ and 
${\mathbb R}^{n+1}$.  As usual, for $s \in {\mathbb Z}_+$  we denote by $C^s(\Omega)$ the 
space of all $s$ times continuously differentiable functions in $\Omega$. 
Next, for a (relatively open) set $S \subset \partial \Omega$  denote by 
$C^{s}(\Omega\cup S)$ the set of such 
functions from the space $C^{s}(\Omega)$ that all their derivatives up to order  $s$ can be 
extended continuously onto $\Omega \cup S$. The standard topology of these  
metrizable spaces induces the uniform convergence on compact subsets in $\Omega \cup 
S$ together with all the partial derivatives up to order  $s$.  
We will also use the standard Banach H\"older spaces $C^{s} (\overline \Omega)$  and 
$C^{s,\lambda} (\overline \Omega)$  (cf. \cite{frid}, \cite[Ch.1, \S 1]{lad}, 
\cite{Kry96},  and the related metrizable spaces $C^{s,\lambda} (\Omega\cup S)$. 

Let also $L^p (\Omega)$, $p\geq 1$, be the Lebesgue spaces, 
$H^s (\Omega)$, $s\geq 0$, stand for the Sobolev spaces if $s\in \mathbb N$ 
and  for the Sobolev-Slobodetskii spaces if $s>0$, $s\not \in \mathbb N$. 

To investigate the polyharmonic heat equation we need also the anisotropic ($2m$-parabolic) 
spaces, see \cite[Ch. 1]{lad}, \cite[Ch. 8]{Kry96} for $m=1$ and \cite{eid} for $m\geq 1$. 
With this aim,  let $C^{2ms,s} (\Omega_T)$, $m\in {\mathbb N}$, stand for the set of all the 
continuous functions $u$ in  $\Omega_T$, having in $\Omega_T$ the continuous partial 
derivatives $\partial^j_t \partial^{\alpha}_x u$ with all the multi-indexes $(\alpha,j) 
\in {\mathbb Z}_+^{n} \times {\mathbb Z}_+$ satisfying $|\alpha|+2mj \leq 2ms$ where, as 
usual,  $|\alpha|=\sum_{j=1}^n \alpha_j$.
Similarly,  we denote by $C^{2ms+k,s} (\Omega_T)$   the set of continuous  
functions in  $\Omega_T$, such that all the partial derivatives 
$\partial^\beta u $ belong to $ C^{2ms,s} (\Omega_T)$ if  
$\beta \in {\mathbb Z}_+^{n}$ satisfies $|\beta|\leq k$, $k \in {\mathbb Z}_+$.  
Of course, it is natural to agree that  
$C^{2ms+0,s} (\Omega_T) =C^{2ms,s} (\Omega_T)$,  $C^{0,0} (\Omega_T) =C(\Omega_T)$ 
and $C^{0} (\Omega) =C(\Omega)$. 
We also denote by $C^{2ms+k,s}((\Omega\cup S)_T )$ the set of such 
functions $u$ from the space $C^{2ms+k,s}(\Omega_T)$ that their partial 
derivatives  $\partial_t^j
\partial^{\alpha+\beta}_x u$, $2mj+|\alpha|\leq 2ms$, $|\beta|\leq k$,  
can be extended continuously onto $(\Omega \cup S)_T$. 
The standard topology of these  
metrizable space induces the uniform convergence on compact subsets of $(\Omega \cup 
S)_T$ together with all the partial derivatives used in its definition  
(the cases $S=\emptyset$ and $S=\partial D$ are included). 

We  use also the anisotropic H\"older spaces (cf., \cite[Ch. 1]{lad}, 
\cite[Ch. 8]{Kry96}) for $m=1$ and \cite{eid} for $m\geq 1$. 
Let $C^{2ms+k,s,\lambda,\lambda/2} ((\Omega \cup S)_T )$ 
stand for the set of anisotropic H\"older  continuous functions with a 
power $\lambda$ over each compact subset of $(\Omega \cup S)_T $ 
together with all the partial derivatives $\partial^{\alpha+\beta}_x \partial^j_t u$ 
where $|\beta|\leq k$, $|\alpha|+2mj\leq 2ms$. 
Clearly, $C^{2ms+k,s,\lambda,\lambda/2}(\overline {\Omega_T})$ 
is a Banach space with the natural norm, 
see, for instance, \cite[Ch. 8]{Kry96} for $m=1$ and \cite{eid} for $m\geq 1$.  
In general, the space $C^{2ms+k,s,\lambda,\lambda/2} ((\Omega\cup S)_T)$ 
can be treated again as a metrizable space, generated by a system of seminorms associated
with a suitable exhaustion $\{\Omega_i\}_{i \in \mathbb N}$ 
of the set $\Omega\cup S$. 

In order to invoke the Hilbert space approach, we need anisotropic ($2m$-parabolic) 
Sobolev spaces $H^{2ms,s} (\Omega_T)$, $s \in  {\mathbb Z}_+$, see, \cite{lad,Kry08} for $m=1$ or \cite{eid} for $m\geq 1$, i.e. 
the set of all the measurable  
functions $u$ in  $\Omega_T$ such that all the generalized partial derivatives 
$\partial^j_t \partial^{\alpha}_x u$ with all the multi-indexes $(\alpha,j) 
\in {\mathbb Z}_+^{n} \times {\mathbb Z}_+$ satisfying $|\alpha|+2mj \leq 2ms$,
belong to the Lebesgue class $L^{2} (\Omega_T)$. This is the  Hilbert 
 space with the natural inner product $(u,v)_{H^{2ms,s} (\Omega_T)}$.
We also may define $H^{2ms,s} (\Omega_T)$ as the completion
of the space $C^{2ms,s} (\overline{\Omega_T})$ with respect to the 
norm $\|\cdot \|_{H^{2ms,s} (\Omega_T)}$ generated by the inner product 
$(u,v)_{H^{2ms,s} (\Omega_T)}$. For $s=0$ we have $H^{0,0} (\Omega_T) = L^{2} (\Omega_T)$. 

We also will use the so-called Bochner spaces 
of functions depending on $(x,t)$ from the strip  
$\mathbb{R}^n \times  [T_1,T_2]$.
Namely, for a Banach space $\mathcal B$ (for example, the space of functions 
on a subdomain of $\mathbb{R}^n$) and    $p \geq 1$, we denote by  
$L^p ([T_1,T_2],{\mathcal B})$ the Banach space of all the measurable mappings 
  $u : [T_1,T_2] \to {\mathcal B}$
with the finite norm  $\| u \|_{L^p ([T_1,T_2],{\mathcal B})}
 := \| \|  u (\cdot,t) \|_{\mathcal B} \|_{L^p ([T_1,T_2])}$, 
see, for instance, \cite[ch. \S 1.2]{Lion69}. 
The space $C ([T_1,T_2],{\mathcal B})$ is introduced with the use of the 
same scheme; this is the Banach space of all the continuous mappings
$u : [T_1,T_2] \to {\mathcal B}$ with the finite norm 
$
   \| u \|_{C ([T_1,T_2],{\mathcal B})}
 := \sup_{t \in [T_1,T_2]} \| u (\cdot,t) \|_{\mathcal B}$. 

Let now $\Delta=\sum\limits_{j=1}^n \partial^2 _{x_j,x_j}$ be the Laplace 
operator  in ${\mathbb R}^n$ and let 
${\mathcal L}_m= \partial_t +(-\Delta)^m $ 
stand for the polyharmonic heat operator  in ${\mathbb R}^{n+1}$.  
Of course, for $m=1$ it coincides the usual heat operator. 

Now let $\partial_ \nu=\sum\limits_{j=1}^n\nu_j \partial_{x_j}$ denote the derivative at 
the direction of the exterior unit normal vector $\nu=(\nu_1, ..., \nu_n)$ to the surface 
$\partial\Omega$. If $\partial\Omega \in C^{2m-1}$ then the higher order normal 
derivatives $\partial_ \nu^j$ are defined near $\partial \Omega$. We fix also a Dirichlet 
system $\{ B_j\}_{j=0}^{2m-1}$ of order $(2m-1)$  consisting of boundary differential  
operators with smooth coefficients near $\partial \Omega$,  i.e. ${\rm ord} B_j =j$ and 
for each $x\in \partial \Omega$ the characteristic polynomials $\sigma (B_j)(x,\zeta)$
related to the operators $B_j$ do not vanish for $\zeta =\nu (x)$. The sets 
$(1,\partial_ \nu, \partial_ \nu^2, \dots \partial_ \nu^{2m-1})$ and 
$(1,\partial_ \nu, \Delta, \partial_ \nu \Delta , \Delta^2 ,\dots \Delta^{m-1}, \partial_ \nu 
\Delta^{m-1} )$ are precisely the Dirichlet systems because $\sigma (\partial_\nu^j)(x,\nu(x))=
\sigma (\partial_\nu \Delta^j)(x,\nu(x)) = \sigma (\Delta^j)(x,\nu(x)) =1$ for each $j \in \mathbb N$.

We consider the Cauchy problem for the polyharmonic heat equation in 
the cylinder $\Omega_T$ in the sense of the Cauchy-Kowalevski 
Theorem with respect to the space variables, cf. \cite{Hd23}.

\begin{problem} \label{pr.1}
Given $m\geq 1$, functions $ u_j\in C^{2m-j+1, 0} (\overline{\Gamma_T})$,  $1\leq j \leq 2m$,
 and $f \in C(\overline \Omega_T)$ find a function $u \in C^{2m, 1}(\Omega_T )\cap
C^{2m-1, 0}((\Omega \cup \overline \Gamma)_T ) $ satisfying 
\begin{equation} \label{eq.heat}
{\mathcal L}_m u = f \mbox{ in } \Omega_T ,
\end{equation}
\begin{equation} \label{eq.Cauchy}
B_j u(x,t)=u_{j+1}(x, t) \mbox{ on } \overline{\Gamma_T}  
\mbox{ for all } 0\leq j \leq 2m-1.
\end{equation}
\end{problem}

If the hypersurface $\Gamma$ and the data of the problem are real analytic then the 
Cauchy-Kowalevski Theorem implies that problem \eqref{eq.heat}, \eqref{eq.Cauchy} has one and 
only one solution in the class of (even formal) power series. However the theorem does not 
imply the existence of solutions to Problem \ref{pr.1} because it grants the solution in a 
small neighbourhood of the hypersurface  $\Gamma_T$ only (but not in a given domain 
$\Omega_T$!). We emphasize that, unlike the classical case, we do not ask for the 
hypersurface $\Gamma$  or/and  the coefficients of the operators $B_j$ or/and the data $f$ 
or/and $u_j$ to be real analytic. 

Of course, the above trick with the Laplace transform suggests us that the problem 
is equivalent to an ill-posed problem for the strongly elliptic operator $(-\Delta)^m$ in 
$\Omega$ with the Cauchy data on $\Gamma$, i.e. Problem \ref{pr.1} is ill-posed itself, too.

\section{Solvability Conditions} 
\noindent 
 
We begin this section proving that Problem \ref{pr.1} can 
not have more than one solution in the spaces of differentiable 
(non-analytic) functions. 

To investigate Problem \ref{pr.1}, 
we use an integral representation constructed with the use the fundamental solution 
$\Phi_m(x,t)$ to polyharmonic heat operator ${\mathcal L}_m$. If $m=1$ then 
\begin{equation} \label{eq.fund.1}
\Phi_1(x,t)=\begin{cases}
\frac{e^{-\frac{|x|^2}{4 \mu \, t}} }{\left(2\sqrt{\pi \mu \, t }\right)^n } & 
\mbox{ if } t>0,\\ 0 & \mbox{ if } t\leqslant   0,
\end{cases} 
\end{equation} 
 see, for instance, \cite{frid,mih76}. Unfortunately, if $m>1$ then 
the fundamental solution can not be represented as an elementary function, 
see, for instance, \cite[Ch. 2, \S 1]{eid}, \cite{rep+eid}, 
\begin{equation} \label{eq.fund.m}
\Phi_m(x,t)= 
\begin{cases} 
k_{n,m} t^{-n/2m} \int_0^{+\infty} \rho^{n-1} e^{-\rho^{2m}}
\Big(\frac{ |x| \rho}{t^{1/2m}}\Big)^{1-n/2} J_{n/2-1} \Big(\frac{ |x| \rho}{t^{1/2m}}\Big) d\rho & \mbox{ if } t>0, \\
 0 & \mbox{ if } t\leqslant   0,
\end{cases} 
\end{equation}
where  $k_{n,m}$ is a normalization constant and $J_{p}$ is the Bessel function of the first 
kind and of order $p$ (see, for example, \cite[Ch. 5, \S 23]{Vla}). 

The fundamental solution allows to construct a useful integral 
Green formula for the operator ${\mathcal L}_m$. With this purpose, 
 Denote by $\{C_0, \dots C_{2m-1}\}$ the Dirichlet 
system associated with the Dirichlet system $\{B_0, \dots B_{2m-1}\}$ via (first) Green formula
for the operator $\Delta^m$, i.e.
$$
\int_{\partial \Omega} \Big( \sum_{j=0}^{2m-1} 
C_{2m-1-j} v B_j u \Big) ds = 
(\Delta ^m u,v) _{L^2 (\Omega)} - (u, \Delta ^m v) _{L^2 (\Omega) }
$$
for all $u,v \in C^\infty (\overline \Omega)$. For instance, if 
$\{B_0, \dots B_{2m-1}\} =(1,\partial_ \nu, \Delta, \partial_ \nu \Delta , \dots 
\Delta^{m-1}, \partial_ \nu \Delta^{m-1} )$ then $\{C_0, \dots C_{2m-1}\} = 
(1, -\partial_ \nu, \Delta , - \partial_ \nu \Delta ,  ,\dots 
\Delta^{m-1}, -\partial_ \nu \Delta^{m-1} )$. 

Consider the cylinder type domain  
$\Omega_{T_1,T_2} = \Omega_{T_2} \setminus \overline{\Omega_{T_1}}$ with $0\leq T_1 <T_2$  
and a closed measurable set $S \subset \partial \Omega$. For functions 
$f \in L^2 (\Omega_{T_1,T_2})$, $v_j \in L^2 ([0,T], H^{2m-j-1/2}(S_T))$, 
$h \in H^{1/2}(\Omega)$ we introduce the following potentials:  
\begin{equation*} 
I_{\Omega, T_1} (h) (x,t)= \int\limits_{\Omega}\Phi(x-y, t)h(y) dy,  \quad
G_{\Omega,T_1} (f) (x,t)=\int\limits_{T_1}^t\int\limits_\Omega \Phi(x-y,\ t-\tau)f(y, \tau)
dy d\tau, 
\end{equation*}
\begin{equation*} 
V^{(j)}_{S,T_1} (v_j) (x,t)=\int\limits_{T_1}^t\int\limits_{S} C_j 
\Phi_m(x-y, t-\tau) v_j(y, \tau)
ds(y)d\tau, \, \, 0\leq j \leq 2m-1
\end{equation*}
(see, for instance,  \cite[Ch. 1, \S 3 and Ch. 5, \S 2]{frid},  \cite[Ch. 4, \S 1]{lad}, 
\cite[Ch. 3, \S 10]{landis} for $m=1$). 
The potential $I_{\Omega,T_1} (h)$ is an analogue of the \emph{Poisson integral} and 
the function $G_{\Omega,T_1} (f)$ is an analogue of the volume heat potential related to $m=1$.
The functions $V^{(0)}_{S,T_1} (v)$ and $V^{(1)}_{S,T_1} (v)$ are often referred to 
as \emph{single layer heat potential} and \emph{double layer heat  
potential}, respectively, if $m=1$. By the construction, 
all these potentials are (improper) integral depending on the parameters $(x,t)$.

Next, we need the so-called Green formula for the polyharmonic heat operator.

\begin{lemma} \label{l.Green}
For all $0 \leq T_1 < T_2$ and all  $u \in C^{2m,1} (\overline{\Omega_{T_1,T_2}}) $
with  the following  formula holds:
\begin{equation} \label{eq.Green}
\left.
\begin{aligned}
u(x, t) \mbox{ in } \Omega_{T_1,T_2}  \\
0 \mbox{ outside } \overline{\Omega_{T_1,T_2}} 
\end{aligned}
\right\} \! = 
I_{\Omega,T_1} (u)   + G_{\Omega,T_1} ({\mathcal L}_mu)  + \sum_{j=0}^{2m-1}
V ^{(j)}_{\partial \Omega, T_1} \left( 
B_j u \right)  . 
\end{equation}
\end{lemma}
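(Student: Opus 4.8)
The plan is to prove \eqref{eq.Green} by the second Green (duality) identity with the fundamental solution $\Phi_m$ as the test function, combined with an excision of the temporal singularity at $\tau=t$. Fix $(x,t)$ and put $v(y,\tau):=\Phi_m(x-y,\,t-\tau)$. Because $(-\Delta_y)^m$ contains only even-order spatial derivatives while $\partial_\tau$ changes sign under $\tau\mapsto t-\tau$, the kernel $v$ solves the formal adjoint equation $\mathcal{L}_m^\ast v:=(-\partial_\tau+(-\Delta_y)^m)v=0$ for $\tau<t$; moreover, by the representation \eqref{eq.fund.m}, $\Phi_m(\cdot,t-\tau)$ is real-analytic in the space variable whenever $t-\tau>0$, so $v$ is smooth in $y$ on all of $\overline{\Omega}$ and no \emph{spatial} excision is required. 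The computation starts from the pointwise identity
\begin{equation*}
v\,\mathcal{L}_m u-u\,\mathcal{L}_m^\ast v=\partial_\tau(uv)+\bigl[\,v\,(-\Delta_y)^m u-u\,(-\Delta_y)^m v\,\bigr],
\end{equation*}
valid wherever $v$ is smooth.

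First I would integrate this identity over the truncated cylinder $\Omega\times(T_1,\,t-\delta)$ for small $\delta>0$. On the left-hand side the relation $\mathcal{L}_m^\ast v=0$ annihilates the second term, leaving $\int v\,\mathcal{L}_m u$, which tends to the volume potential $G_{\Omega,T_1}(\mathcal{L}_m u)(x,t)$ as $\delta\to0^+$. On the right-hand side the time term integrates to the two slices $\tau=T_1$ and $\tau=t-\delta$, yielding $\int_\Omega\Phi_m(x-y,\delta)\,u(y,t-\delta)\,dy$ minus $\int_\Omega\Phi_m(x-y,\,t-T_1)\,u(y,T_1)\,dy$, the latter being exactly the Poisson-type potential $I_{\Omega,T_1}(u)(x,t)$. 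The spatial term is treated by the Green formula for $\Delta^m$ recalled above, which turns $\int_\Omega[\,v(-\Delta)^m u-u(-\Delta)^m v\,]\,dy$ into a boundary integral over $\partial\Omega$ pairing the Cauchy data $\{B_j u\}$ against the dual operators $\{C_j\}$ acting on the kernel $v=\Phi_m$.

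Next I would pass to the limit $\delta\to0^+$. The decisive step is the approximate-identity property of the fundamental solution: $\Phi_m(\cdot,\delta)\,dy$ converges weakly to the Dirac mass at the origin, so that $\int_\Omega\Phi_m(x-y,\delta)\,u(y,t-\delta)\,dy\to u(x,t)$ when $x\in\Omega$ and $\to0$ when $x\notin\overline{\Omega}$. This reproduces the left-hand side of \eqref{eq.Green}, the vanishing outside $\overline{\Omega_{T_1,T_2}}$ being delivered by this dichotomy together with $\Phi_m\equiv0$ for nonpositive time (which covers $t\le T_1$). Integrating the boundary term in $\tau$ to recognize the layer potentials $V^{(j)}_{\partial\Omega,T_1}(B_j u)$ and rearranging then yields exactly \eqref{eq.Green}.

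The main obstacle is the limit $\delta\to0^+$ in the polyharmonic case $m>1$, where $\Phi_m$ is the non-elementary kernel \eqref{eq.fund.m} rather than a Gaussian. To justify the weak convergence to the Dirac mass I would use the self-similar scaling $\Phi_m(x,t)=t^{-n/2m}\,\phi_m(x\,t^{-1/2m})$ dictated by \eqref{eq.fund.m}, the normalization $\int_{\mathbb{R}^n}\Phi_m(x,\delta)\,dx=1$ coming from the fundamental-solution property, and the decay of the profile $\phi_m$ forced by the factor $e^{-\rho^{2m}}$ in \eqref{eq.fund.m}; these also guarantee absolute convergence of the improper integrals defining the potentials and legitimize the interchange of limit and integration. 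A secondary, purely combinatorial, difficulty is to match the boundary contributions produced by the $\Delta^m$ Green formula with the potentials $V^{(j)}$: this amounts to tracking the index reversal $j\leftrightarrow 2m-1-j$ and the sign $(-1)^m$ relating $(-\Delta)^m$ to $\Delta^m$, both absorbed into the definition of the dual Dirichlet system $\{C_j\}$.
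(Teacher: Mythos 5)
Your proposal is correct and follows essentially the same route as the paper, whose ``proof'' of Lemma \ref{l.Green} is only a citation (Sveshnikov et al.\ for $m=1$ and Tarkhanov, Theorem 2.4.8, for general operators admitting fundamental solutions): those references establish the formula by exactly your argument --- the Green/duality identity integrated against $v(y,\tau)=\Phi_m(x-y,t-\tau)$ over the truncated cylinder $\Omega\times(T_1,t-\delta)$, followed by the limit $\delta\to 0^+$. You also correctly isolate the two points that genuinely need care for $m>1$, namely the approximate-identity property of the non-Gaussian kernel (scaling, normalization $\int_{\mathbb{R}^n}\Phi_m(x,\delta)\,dx=1$, and rapid decay of the profile, which are classical facts from Eidel'man's theory cited in the paper) and the index/sign bookkeeping absorbed into the dual Dirichlet system $\{C_j\}$.
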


\begin{proof}  See, for instance,  \cite[ch. 6, \S 12]{svesh} for $m=1$ and 
 \cite[theorem 2.4.8]{Tark37} for more general 
operators, admitting fundamental solutions/parametreces).
\end{proof}

Formulas \eqref{eq.fund.1}, \eqref{eq.fund.m} mean that the kernels 
$\Phi_m(x-y,t-\tau)$ are smooth outside 
the diagonal  $\{ (x,t ) =(y,\tau )\}$ and real analytic with respect to 
the space variables. In particular, this means 
that the $2m$-parabolic operator ${\mathcal L}_m$ is hypoelliptic.  
Moreover, 
any $C^{2m,1} (\Omega_{T_1,T_2})$-solution $v$ to the polyharmonic 
heat equation ${\mathcal L}_m v = 0$ in the cylinder domain $\Omega_{T_1, T_2}$ 
belongs to $C^\infty (\Omega_{T_1,T_2})$ and, actually $v (x,t)$
 is real analytic with respect to the space variable  
$x \in \Omega$ for each $t \in (T_1,T_2)$ (for $m=1$, see, for instance, 
\cite[Ch. VI, \S 1, Theorem 1]{mih76} and for $m>1$ see \cite[Ch. 2, \S 1, Theorem 2.1]{eid}. 
Then Green formula 
\eqref{eq.Green} and the information on the kernel $\Phi_m$ 
provide us with a Uniqueness Theorem for Problem \ref{pr.1}.

\begin{theorem}[A Uniqueness Theorem] \label{t.Uniq}
 If $\Gamma$ has at least one interior point in the relative topology of $\partial\Omega$ 
then Problem \ref{pr.1} has no more than one solution.
\end{theorem}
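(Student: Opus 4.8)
The plan is to subtract two candidate solutions and to exploit that vanishing Cauchy data on $\Gamma_T$, once inserted into the Green formula \eqref{eq.Green}, produce a function which is real-analytic in the space variables \emph{across} $\Gamma$ and vanishes on the exterior side of $\partial\Omega$; real-analytic continuation will then force it to vanish inside as well. So suppose $u_1,u_2$ both solve Problem \ref{pr.1} with the same data. Then $u:=u_1-u_2$ belongs to $C^{2m,1}(\Omega_T)\cap C^{2m-1,0}((\Omega\cup\overline\Gamma)_T)$, satisfies $\mathcal L_m u=0$ in $\Omega_T$, and has $B_j u=0$ on $\overline{\Gamma_T}$ for $0\le j\le 2m-1$. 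It is enough to show $u\equiv 0$ in $\Omega_T$.

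First I would fix $0<T_1<T_2<T$ and apply \eqref{eq.Green} on $\Omega_{T_1,T_2}$ to $u$. Because $\mathcal L_m u=0$, the volume potential $G_{\Omega,T_1}(\mathcal L_m u)$ disappears; because $B_j u=0$ on $\Gamma_T$, each boundary potential $V^{(j)}_{\partial\Omega,T_1}(B_j u)$ collapses to an integral over the relatively closed complementary set $\partial\Omega\setminus\Gamma$. Setting $F:=I_{\Omega,T_1}(u)+\sum_{j=0}^{2m-1}V^{(j)}_{\partial\Omega\setminus\Gamma,T_1}(B_j u)$, formula \eqref{eq.Green} says that $F=u$ on $\Omega_{T_1,T_2}$ and $F=0$ off $\overline{\Omega_{T_1,T_2}}$.

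Now choose an interior point $x_0$ of $\Gamma$ and a ball $B\subset\mathbb R^n$ centred at $x_0$ so small that $B\cap\partial\Omega\subset\Gamma$; then $B$ is disjoint from $\partial\Omega\setminus\Gamma$. For $(x,t)\in B\times(T_1,T_2)$ the Poisson-type term $I_{\Omega,T_1}(u)$ solves $\mathcal L_m(\cdot)=0$ since $t>T_1$ keeps the kernel $\Phi_m$ off its only singularity, and each surface potential over $\partial\Omega\setminus\Gamma$ solves $\mathcal L_m(\cdot)=0$ since its integration set is separated from $x$; hence $\mathcal L_m F=0$ on $B\times(T_1,T_2)$ and $F$ is smooth there. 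By the real-analyticity of null-solutions of $\mathcal L_m$ in the space variable (recorded after \eqref{eq.Green}), for every fixed $t$ the function $F(\cdot,t)$ is real-analytic on the connected ball $B$. It vanishes on the nonempty open piece $B\setminus\overline\Omega$, so the identity theorem gives $F(\cdot,t)\equiv 0$ on $B$, whence $u=F=0$ on $(B\cap\Omega)\times(T_1,T_2)$. Since $\Omega$ is connected and each $u(\cdot,t)$ is real-analytic in $x$ on $\Omega$, the vanishing on the open set $B\cap\Omega$ propagates to all of $\Omega$; as $T_1,T_2$ were arbitrary, $u\equiv 0$ in $\Omega_T$.

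The main obstacle is matching regularities: Lemma \ref{l.Green} is stated for $u\in C^{2m,1}(\overline{\Omega_{T_1,T_2}})$, whereas a solution of Problem \ref{pr.1} is only $C^{2m,1}$ in the \emph{open} cylinder and $C^{2m-1,0}$ up to $\overline\Gamma$. I would secure the representation \eqref{eq.Green} for this weaker class by a limiting argument, applying it on cylinders $\Omega'_{T_1,T_2}\Subset\Omega_{T_1,T_2}$ exhausting $\Omega_{T_1,T_2}$ and passing to the limit, using that the potentials $I$ and $V^{(j)}$ depend continuously on their data in the trace norms $H^{2m-j-1/2}$ figuring in their definition, or else by invoking the general form of the Green formula for operators admitting a fundamental solution cited in the proof of Lemma \ref{l.Green}. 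The only other point needing care is the termwise verification that $\mathcal L_m F=0$ and that $F(\cdot,t)$ is analytic on $B$; both reduce to differentiation under the integral sign, which is legitimate here because for $I_{\Omega,T_1}(u)$ the time gap $t-T_1$ stays positive, while for the surface potentials the point $x\in B$ stays at positive distance from the integration set $\partial\Omega\setminus\Gamma$, so in either case $(x,t)$ avoids the diagonal singularity of $\Phi_m$ and the kernel is smooth and space-analytic.
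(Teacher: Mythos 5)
Your overall strategy (Green formula with vanishing Cauchy data, then real-analytic continuation in the space variables across $\Gamma$) is the same as the paper's, but you apply the Green formula on the wrong domain, and this is not a removable technicality. A solution of Problem \ref{pr.1} is only assumed to lie in $C^{2m,1}(\Omega_T)\cap C^{2m-1,0}((\Omega\cup\overline\Gamma)_T)$: it has no regularity whatsoever --- not even boundedness --- near $(\partial\Omega\setminus\Gamma)_T$. Consequently the traces $B_j u$ on $\partial\Omega\setminus\Gamma$ that enter your function $F=I_{\Omega,T_1}(u)+\sum_{j=0}^{2m-1}V^{(j)}_{\partial\Omega\setminus\Gamma,T_1}(B_ju)$ simply do not exist, and even the Poisson-type term $I_{\Omega,T_1}(u)$ may diverge, since $u(\cdot,T_1)$ need not be integrable over $\Omega$. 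Your proposed repair --- exhausting $\Omega_{T_1,T_2}$ by compactly contained cylinders and passing to the limit using continuity of the potentials in the trace norms --- fails for the same reason: continuity of the potential operators is irrelevant when the data themselves (the traces of $B_j u$ on the exhausting surfaces) have no limit; near $\partial\Omega\setminus\Gamma$ they need not converge in any norm, nor even remain bounded. The obstacle you flag at the end is thus the crux of the proof, not a loose end, and your two suggested remedies do not overcome it.

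The paper's proof avoids this by localizing rather than exhausting: fix the point $(x',t')\in\Omega_T$ at which vanishing is to be proved, and choose a subdomain $\Omega'\ni x'$ with $\Omega'\subset\Omega$ and $\overline{\Omega'}\cap\partial\Omega\subset\Gamma\cap B(x_0,r)$. Then $\partial\Omega'$ consists of a piece inside $\Gamma$, where the Cauchy data of $u$ vanish, and a piece interior to $\Omega$, where $u$ is smooth by hypoellipticity of ${\mathcal L}_m$; hence $u\in C^{2m,1}(\Omega'_{T_1,T_2})\cap C^{2m-1,0}(\overline{\Omega'_{T_1,T_2}})$, the Green formula on $\Omega'_{T_1,T_2}$ is legitimate, and all potentials in it are well defined. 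The resulting function $v=I_{\Omega',T_1}(u)+\sum_{j=0}^{2m-1} V^{(j)}_{\partial\Omega'\setminus\Gamma,T_1}(B_ju)$ satisfies ${\mathcal L}_m v=0$ off $(\partial\Omega'\setminus\Gamma)_{T_1,T_2}$, is real analytic in $x$ there, vanishes for $x\in B(x_0,r)\setminus\overline{\Omega}$, and therefore vanishes identically on the component containing $(x',t')$, giving $u(x',t')=v(x',t')=0$. Your analytic-continuation step itself (identity theorem on the ball, then propagation through the connected set $\Omega$ using space-analyticity of $u$) is sound; grafted onto the localized Green formula in place of the global one, your argument becomes correct and essentially coincides with the paper's.
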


\begin{proof} For $m=1$ see \cite[Theorem 1, Corollary 1]{PuSh12}.
For $m>1$ the proof can be done in the same way with natural modifications. 
Indeed, under the hypothesis of the theorem there is an interior (in the relative topology 
of $\Gamma$!) point $x_0$  on $\Gamma$. Then there is such a number $r>0$ that $B(x_0,\ r)
\cap\partial\Omega\subset\Gamma$ where  $B(x_0,\ r)$ is ball in ${\mathbb R}^n$ with center 
at $x_0$ and radius  $r$. Fix an arbitrary point $(x',t') \in \Omega_T$. Clearly, 
there is a domain $\Omega' \ni x'$ satisfying $\Omega ' \subset \Omega$ and $\Omega ' 
\cap \partial \Omega \subset \Gamma \cap B(x_0,\ r)$. Then $(x', t') \in {\Omega'_{T_1,T_2}}$ 
with some $0 < T_1 < T_2 <T$.

But $u\in C^{2m,1}(\Omega'_{T_1,T_2})\cap C^{2m-1,0}(\overline{\Omega'_{T_1,T_2}})$ 
(for $m=1$ see, for instance,  \cite[Ch. 1, \S 3 and Ch. 5, \S 2]{frid} and 
for $m>1$ it follows from \cite[Ch. 2, \S 1, Theorem 2.2]{eid}) and   
${\mathcal L}_{m}u = 0$ in $\Omega'_{T_1,T_2}$ under the hypothesis of the theorem. Hence 
formula \eqref{eq.Green} implies:  
\begin{equation} \label{2.8a}
\left.
\begin{aligned}
u(x, t),\ (x,t)\in \Omega'_{T_1,T_2}  \\
0,\ (x, t)\not\in \overline{\Omega'_{T_1,T_2}} 
\end{aligned}
\right\}= I_{\Omega',T_1} (u) (x,t) + \sum_{j=0}^{2m-1}
V^{(j)} _{\partial \Omega' \setminus \Gamma, T_1} \left( B_j u\right) 
(x,t), 
\end{equation}
because  $B_j u \equiv  0$ on $\Gamma_T$ for all $0\leq j \leq 2m-1$.

Taking into account the character of the singularity of the kernel 
$\Phi_m (x-y,t-\tau)$ we conclude that the following properties are fulfilled for 
the integrals, depending on parameter, from the right hand side of identity 
\eqref{2.8a}:
\begin{equation*}
 I_{\Omega',T_1} (u)  \in C^{2m,1} (\{x \in {\mathbb R}^n,T_1<t<T_2\}), 
\end{equation*}
\begin{equation*}
 V ^{(j)}_{\partial \Omega' \setminus \Gamma, T_1<t<T_2} \left( 
B_j u \right) 
\in C^{2m,1} (\{x \in {\mathbb R}^n \setminus (\partial \Omega' \setminus \Gamma), 
T_1<t<T_2 \}) 
\end{equation*}
(see, for instance, \cite[Ch. 1, \S 3 and  Ch. 5, \S 2]{frid}, 
\cite[Ch. 4, \S 1]{lad} or \cite[Ch. 3, \S 10]{landis}  for m=1).
Moreover, as $\Phi_m$ is a fundamental solution to the polyharmonic heat operator 
then  
\begin{equation*}
{\mathcal L} _{m} (x,t) \Phi_m (x-y,t-\tau) = 0 \mbox{ for } (x,t) \ne (y,\tau),
\end{equation*}
and therefore, using Leibniz rule for differentiation of integrals 
depending on parameter we obtain: 
\begin{equation*}
{\mathcal L} _{m} I_{\Omega',T_1} (u) = 0  \mbox{ in the domain }   
\{x \in {\mathbb R}^n, T_1<t<T_2\}, 
\end{equation*}
\begin{equation*}
{\mathcal L} _{m} V^{(j)} _{\partial \Omega' \setminus \Gamma, T_1} \left( B_j u\right) 
=  0 \mbox{ in  }
\Omega''_{T_1,T_2} = \{x \in {\mathbb R}^n \setminus 
(\partial \Omega' \setminus \Gamma), T_1<t<T_2\} \mbox{ for all } 0\leq j \leq 2m-1.
\end{equation*}
Hence the function  
\begin{equation*}
v(x,t) = I_{\Omega',T_1} (u) (x,t) + 
 V^{(j)} _{\partial \Omega' \setminus \Gamma, T_1} \left( B_j u\right) 
(x,t)   , 
\end{equation*}
satisfies the polyharmonic heat equation 
$ ({\mathcal L} _{m} v) (x,t) = 0$ in $\Omega''_{T_1,T_2}$. 
As we mentioned above, this implies that the function $v (x,t)$ is real analytic with respect 
to the space variable $x \in {\mathbb R}^n \setminus (\partial \Omega' \setminus \Gamma)$ for 
any $T_1<t<T_2$ . By the construction the function $v (x,t)$ is real analytic 
with respect to $x$ in the ball $B(x_0,r)$ and it equals to zero for $x \in B(x_0,R) 
\setminus \overline \Omega$ for all $T_1<t<T_2$. Therefore, the Uniqueness Theorem for real 
analytic functions yields $v(x,t) \equiv 0$ in $\Omega''_{T_1,T_2}$, and in the cylinder 
$\Omega'_{T_1,T_2}$, the containing point  $(x',t')$. Now it follows from \eqref{2.8a}) that  
$u (x',t') =v(x',t') = 0$ and then, since the point $(x',t') \in \Omega_T$ is arbitrary we 
conclude that $u \equiv 0$ in $\Omega_T$.  
\end{proof}

Now we are ready to formulate a solvabilty criterion for Problem \ref{pr.1}. 
As before, we assume that  $\Gamma$ is a relatively open connected set of $\partial \Omega$. 
Then we may find a set $\Omega^+ \subset {\mathbb R}^n$ in such a way that the set $D=\Omega\cup\Gamma\cup\Omega^+$ 
would be a bounded domain with piece-wise smooth boundary. It is convenient to set 
$\Omega^- = \Omega$. For a function 
$v$ on $D_T$ we denote by  $v^+$ its restriction to $\Omega^+_T$ and, similarly, we 
denote by  $v^-$ its restriction to $\Omega_T$. It is natural to denote limit values of 
 $v^\pm$ on $\Gamma_T$, when they are defined, by  $v^\pm_{|\Gamma_T}$. 
Actually, for $m=1$ similar solvability criterions for Problem \ref{pr.1} were obtained in 
\cite{PuSh12} and \cite{KuSh2019}.

\begin{theorem}[Solvability criterion] \label{t.sol}
Let $\lambda\in (0,1)$, $\partial \Omega$ belong  to $C^{2m-1+\lambda}$ and let  
$\Gamma$ be a relatively open connected subset of $\partial \Omega$. If 
$f\in C^{0,0,\lambda,\lambda/2}
(\overline{\Omega_T})$,   $u_j\in C^{2m-j,0,\lambda,\lambda/2}(\overline{\Gamma_T})$, 
 $1\leq j \leq 2m$, then
Problem \eqref{eq.heat}, \eqref{eq.Cauchy}  
is solvable in the space $C^{2m,1,\lambda,\lambda/2}(\Omega_T)\cap C^{2m-1,0,\lambda,\lambda/2}
(\Omega_T \cup \Gamma_T)$ if and only if there is a function $F\in C^\infty (D_T)$ 
satisfying the following two conditions:
1) ${\mathcal L}_m F=0$ in $D_T$, 2) $F=G_{\Omega, 0} (f)+ 
\sum_{j=0}^{2m-1} V^{(j)}_{\overline{\Gamma},0} 
(u_{j+1}) $ in $\Omega^+_T$.
\end{theorem}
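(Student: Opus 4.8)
The plan is to organize the proof around the single parabolic potential attached to the Cauchy data, namely
$$\Psi := G_{\Omega,0}(f) + \sum_{j=0}^{2m-1} V^{(j)}_{\overline\Gamma,0}(u_{j+1}),$$
which is an integral in the parameters $(x,t)$ on all of $D_T$, and to show that solving Problem \ref{pr.1} is equivalent to continuing its restriction $\Psi^+=\Psi|_{\Omega^+_T}$ to a homogeneous solution on all of $D_T$. Two properties of $\Psi$ drive everything. First, since $f$ lives in $\Omega$ and the layer densities live on $\overline\Gamma\subset\partial\Omega$, the kernel identity ${\mathcal L}_m(x,t)\Phi_m(x-y,t-\tau)=0$ for $(x,t)\ne(y,\tau)$ gives ${\mathcal L}_m\Psi=0$ in $\Omega^+_T$ and ${\mathcal L}_m\Psi=f$ in $\Omega_T$. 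Second, the Dirichlet system $\{C_j\}$ was chosen dual to $\{B_j\}$ through the Green formula for $\Delta^m$, so the combined layer potential obeys the reproducing jump relations: for $0\le i\le 2m-1$ the $\Gamma_T$-trace of $B_i\Psi$ jumps by exactly $u_{i+1}$ (inside minus outside), i.e. $[B_i\Psi]_{\Gamma_T}=u_{i+1}$, whereas $B_iG_{\Omega,0}(f)$ remains continuous across $\Gamma_T$. I would record these first, citing the parabolic potential theory of \cite{frid}, \cite{lad} for $m=1$ and its $m>1$ analogue built on \eqref{eq.fund.m}.

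For sufficiency, given $F$ as in the statement I set $u := \Psi^- - F$ on $\Omega_T$, where $\Psi^-=\Psi|_{\Omega_T}$. Then ${\mathcal L}_m u = f - 0 = f$ in $\Omega_T$ by the first property and condition~1. Because $F$ is smooth across $\Gamma_T$ and equals $\Psi$ on $\Omega^+_T$ by condition~2, the inside trace of $B_iF$ coincides with $B_i\Psi^+|_{\Gamma_T}$, so $B_iu|_{\Gamma_T}=B_i\Psi^-|_{\Gamma_T}-B_i\Psi^+|_{\Gamma_T}=[B_i\Psi]_{\Gamma_T}=u_{i+1}$ for all $0\le i\le 2m-1$, which is \eqref{eq.Cauchy}. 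The membership $u\in C^{2m,1,\lambda,\lambda/2}(\Omega_T)\cap C^{2m-1,0,\lambda,\lambda/2}(\Omega_T\cup\Gamma_T)$ follows since $F$ is $C^\infty$ and the Schauder estimates for the parabolic volume and layer potentials carry $f\in C^{0,0,\lambda,\lambda/2}(\overline{\Omega_T})$ and $u_{j+1}\in C^{2m-j,0,\lambda,\lambda/2}(\overline{\Gamma_T})$ into precisely this class.

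For necessity, given a solution $u$ I define $F:=\Psi^- - u$ on $\Omega_T$ and $F:=\Psi^+$ on $\Omega^+_T$. On each piece $F$ is a homogeneous solution: ${\mathcal L}_m(\Psi^- - u)=f-f=0$ in $\Omega_T$ and ${\mathcal L}_m\Psi^+=0$ in $\Omega^+_T$. The two definitions match to order $2m-1$ across $\Gamma_T$, since the inside trace of $B_iF$ is $B_i\Psi^-|_{\Gamma_T}-u_{i+1}$, the outside trace is $B_i\Psi^+|_{\Gamma_T}$, and their difference is $[B_i\Psi]_{\Gamma_T}-u_{i+1}=0$. As $\{B_i\}$ is a Dirichlet system of order $2m-1$, the continuity of all $B_iF$ across the $C^{2m-1+\lambda}$ hypersurface $\Gamma_T$ forces $F$ and its normal derivatives up to order $2m-1$ to be continuous there; inserting this into the Green formula \eqref{eq.Green} across $\Gamma_T$ cancels every boundary term, so ${\mathcal L}_mF=0$ in the distributional sense on all of $D_T$. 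By hypoellipticity of ${\mathcal L}_m$ the function $F$ is then $C^\infty(D_T)$ (and real analytic in $x$), giving condition~1, while condition~2 holds by the very definition of $F$ on $\Omega^+_T$.

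The main obstacle is the analytic input of the first paragraph for $m>1$: establishing, from the non-elementary fundamental solution \eqref{eq.fund.m}, both the Schauder mapping properties of the higher-order parabolic potentials $V^{(j)}_{\overline\Gamma,0}$ and $G_{\Omega,0}$ into the anisotropic H\"older classes and the precise jump relations $[B_i\Psi]_{\Gamma_T}=u_{i+1}$. For $m=1$ these are the classical single- and double-layer heat-potential estimates of \cite{frid}, \cite{lad}; for $m>1$ one must rederive them from the Bessel-integral representation \eqref{eq.fund.m}, controlling the singularity of $\Phi_m$ and of its $C_j$-derivatives on the diagonal. Everything else --- the algebraic gluing, the use of \eqref{eq.Green}, and the passage from a distributional to a classical solution --- is then routine given the hypoellipticity and real analyticity already recorded.
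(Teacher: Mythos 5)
Your proof is correct in substance, and its two halves have a different relation to the paper. The sufficiency half is essentially the paper's own argument: you form $U=\Psi^- - F$ (the paper's $U=\mathcal{F}-F$ in \eqref{eq.sol}), obtain the equation from the mapping properties \eqref{eq.G1}--\eqref{eq.Vj}, and obtain the Cauchy data from the jump relations (the paper's Lemma \ref{l.W.jump}) together with the continuity of $B_i G_{\Omega,0}(f)$ across $\Gamma_T$; you also correctly flag that for $m>1$ the Schauder-type bounds and the jump relations for the potentials built on \eqref{eq.fund.m} are the real analytic input, which the paper itself only cites. The necessity half, however, is a genuinely different route. The paper never glues distributions across $\Gamma_T$: given a point of $D_T$ it chooses a subcylinder $\Omega'_T$ with $\overline{\Omega'}\cap\partial\Omega\subset\Gamma$, applies the representation formula \eqref{eq.Green} to $u$ there, and rewrites $F=\mathcal{F}-\chi_{\Omega_T}u$ as a sum of potentials whose integration sets avoid a neighbourhood $D'_T$ of that point (formula \eqref{eq.F}); smoothness of $\Phi_m$ off the diagonal then gives $F\in C^\infty(D'_T)$ and ${\mathcal L}_m F=0$ there, with no use of the jump relations and no distribution theory --- the same mechanism as the Uniqueness Theorem. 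You instead match the traces $B_iF^\pm$ on $\Gamma_T$ via the jump relations, use the Dirichlet-system property to upgrade this to continuity of all derivatives of order $\le 2m-1$, integrate by parts to get ${\mathcal L}_m F=0$ weakly, and conclude by hypoellipticity. Your route makes the jump lemma do double duty and avoids the localization construction; the paper's route keeps necessity independent of the jump relations and of weak solutions. Two points in your version deserve care: since $u$, and hence $F^-$, is only $C^{2m-1,0,\lambda,\lambda/2}$ up to $\Gamma_T$ (not $C^{2m,1}$ up to the closure), the integration by parts must be run on shrunken cylinders and passed to the limit --- harmless, because the lateral boundary terms pair the test function only with $B_jF$, $j\le 2m-1$ --- and the identity you need for this step is the plain Green formula for $\Delta^m$ (the one defining the system $\{C_j\}$), not the representation formula \eqref{eq.Green}, which as stated requires $C^{2m,1}$ regularity up to the closure.
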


{\it Proof.}\emph{ \textrm{Necessity.}} Let a function  $u(x, t)\in C^{2m,1,\lambda,
\lambda/2}(\Omega_T)\cap C^{2m-1,0,\lambda,\lambda/2}(\Omega_T\cup\Gamma_T)$ 
satisfies \eqref{eq.heat}, \eqref{eq.Cauchy}.  
Clearly, the function  $u(x,t)$ belongs to the space $C^{2m,1,\lambda,\lambda/2}
(\Omega'_T) \cap C^{2m-1,0,\lambda,\lambda/2} (\overline{\Omega'_T}$ for each cylindrical domain $\Omega'_T$ with such a base $\Omega'$ 
that $\Omega' \subset \Omega$ and $\overline{\Omega'}\cap\partial \Omega \subset \Gamma$. 
Besides, ${\mathcal L}u =f \in C^{0,0,\lambda,\lambda/2}(\overline{\Omega_T'})$. 
Without loss of the generality we may assume that the interior part $\Gamma' $ of the set  
$\overline{\Omega'}\cap\partial \Omega$ is non-empty. Consider in the domain $D_T$ the 
functions
\begin{equation} \label{eq.FF}
\mathcal{F}=G_{\Omega, 0} (f)+\sum_{j=0}^{2m-1} V^{(j)}_{\overline{\Gamma},0} 
(u_{j+1})\mbox{ and } F={\mathcal F}-\chi_{\Omega_T} u,
\end{equation} 
where $\chi_{M}$ is a characteristic function of the set 
$M \subset {\mathbb R}^{n+1}$. By the very construction condition 2) is fulfilled for it. 
Note that  $\chi_{\Omega_T} u=\chi_{\Omega'_T} u$ in $D'_T$, where $D'= \Omega' \cup 
\Gamma' \cup \Omega^+$. Then Lemma \ref{l.Green} yields
\begin{equation} \label{eq.F}
F=G_{\Omega \setminus \overline{\Omega'}, 0} (f)+
\sum_{j=0}^{2m-1} V^{(j)}_{\overline{\Gamma},0} 
(u_{j+1}) - I_{\Omega',0} (u) \mbox{ in } D'_T.
\end{equation} 

Arguing as in the proof of Theorem  \ref{t.Uniq} we conclude that  
each of the integrals in the right hand side of \eqref{eq.F} is smooth 
outside the corresponding integration set and each satisfies 
homogeneous polyharmonic heat equation there. In particular, we see that 
$F \in C^\infty (D'_T)$ and ${\mathcal L} F =0$ in $D'_T$ because of 
\cite[Ch. VI, \S 1, Theorem 1]{mih76}. Obviously, for 
any point $(x, t)\in D_T$ there is a domain $D'_T$ containing $(x,\ t)$. That is why  
${\mathcal L}_m F =0$ in $D_T$, and hence $F$ belongs to the space $ C^\infty(D_T)$. 
Thus, this function satisfies condition 1), too. 

\emph{\textrm{Sufficiency.}} Let there be a function $F \in  C^{\infty}(D_T) $, satisfying 
conditions 1) and 2) of the theorem. Consider on the set $D_T$ the function 
\begin{equation} \label{eq.sol}
U=\mathcal{F}- F .
\end{equation} 
As  $f \in C^{0,0,\lambda,\lambda/2}
(\overline{\Omega_T})$ then the results of \cite[Ch. 1, \S 3]{frid}, 
\cite[Ch. 4, \S\S 11-14]{lad} for $m=1$ and \cite[Ch. 2, \S 1, Theorem 2.2]{eid} 
for $m>1$ imply 
\begin{equation} \label{eq.G1} 
G_{\Omega, 0} (f) \in C^{2m,1,\lambda,\lambda/2}(\overline{\Omega^\pm_T}) 
\cap  C^{2m-1,0,\lambda,\lambda/2}(D_T)  
\end{equation} 
and, moreover, 
\begin{equation} \label{eq.G2}  
{\mathcal L}_m G^-_{\Omega, 0} (f) = f \mbox{ in } \Omega_T, \quad 
{\mathcal L}_m G^+_{\Omega, 0} (f) = 0 \mbox{ in }  \Omega^+_T.
\end{equation} 
Since  $u_j \in C^{2m-j,0,\lambda,\lambda/2} 
(\overline{\Gamma_T})$ then the results of  \cite[Ch. 4, \S\S 11-14]{lad}, 
\cite[Ch. 5, \S 2]{frid}
 for $m=1$ and \cite[Ch. 2, \S 1, Theorem 2.2]{eid} 
for $m>1$ yield
\begin{equation} \label{eq.Vj}  
V^{(j)}_{\overline \Gamma, 0} (u_j) \in C^\infty(\Omega^\pm_T) \cap 
C^{2m-1,0,\lambda,\lambda/2}((\Omega^\pm\cup \Gamma)_T), \quad 
{\mathcal L}^{(j)} V_{\overline \Gamma, 0} (u_j) = 0 \mbox{ in } \Omega_T \cup \Omega^+_T .
\end{equation} 

Since $F \in C^{\infty} (D_T) \subset C^{1,0,\lambda,\lambda/2}((\Omega^+\cup \Gamma)_T)$ 
then formulas \eqref{eq.sol}--\eqref{eq.Vj} imply that  
$U $ belongs $ C^{2m,1,\lambda,\lambda/2}(\Omega^\pm_T )  
\cap C^{2m-1,0,\lambda,\lambda/2}((\Omega^\pm \cup \Gamma)_T)$ and 
${\mathcal L} U = \chi_{D_T} f$  in $\Omega_T \cup \Omega^+_T$. 
In particular, \eqref{eq.heat} is fulfilled for $U^-$.
Let us show that the function $U^-$ satisfies \eqref{eq.Cauchy}. 
Since  $F \in  C^{\infty} (D_T)$ we see that $\partial ^\alpha F^- = \partial ^\alpha F^+$ 
on $\Gamma_T$ for $\alpha \in {\mathbb Z}_+$ with $|\alpha|\leqslant   2m-1$ and 
\begin{equation*} 
\partial ^\alpha F^+_{|\Gamma_T} = \left( \partial ^\alpha  G_{\Omega, 0} (f)+ 
\sum_{j=0}^{2m-1} \partial ^\alpha  (V^{(j)}_{\overline{\Gamma},0} (u_{j+1}) \right)^+
_{|\Gamma_T}.
\end{equation*}  
Thus, it follows from formula \eqref{eq.G1} that for all $0\leq i \leq 2m-1$ we have
\begin{equation} \label{eq.BoundPot} 
B_i U^-_{|\Gamma_T}=
\Big(B_i\Big(\sum_{j=0}^{2m-1} V^{(j)}_{\overline{\Gamma},0} (u_{j+1})\Big)\Big)^-_{|\Gamma_T} - \Big(B_i \Big(\sum_
{j=0}^{2m-1} V^{(j)}_{\overline{\Gamma},0} (u_{j+1})\Big)\Big)^+_{|\Gamma_T} .
\end{equation}

Hence, in order to finish the proof we need the following lemma. 

\begin{lemma} \label{l.W.jump} Let $\Gamma \in C^{2m-1+\lambda}$ and $u_j \in 
C ^{2m-j,0,\lambda ,\lambda/2} (\overline{\Gamma_T})$, $1\leq j \leq 2m$.
Then   
\begin{equation} \label{eq.Wnu} 
\Big(B_i\Big(\sum_{j=0}^{2m-1} V^{(j)}_{\overline{\Gamma},0} (u_{j+1})\Big)\Big)^-_{|\Gamma_T} 
- \Big(B_i \Big(\sum_
{j=0}^{2m-1} V^{(j)}_{\overline{\Gamma},0} (u_{j+1})\Big)\Big)^+_{|\Gamma_T} = u_{i+1}, 
\,\, 0\leq i \leq 2m-1. 
\end{equation}  
\end{lemma}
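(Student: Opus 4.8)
The plan is to read \eqref{eq.Wnu} as a \emph{local} jump relation on $\Gamma_T$, to reduce it to the classical jump relations of the individual parabolic potentials $V^{(j)}$, and to fix the exact value of the combined jump through the duality of the Dirichlet systems $\{B_j\}$ and $\{C_j\}$ encoded in Green's formula \eqref{eq.Green}. First I would localise: if the density of a potential is supported away from a point $p\in\Gamma_T$, then near $p$ that potential is smooth and real-analytic in $x$ (by the same hypoellipticity argument used in the proof of Theorem~\ref{t.Uniq}), hence jumps by $0$. Thus it suffices to work in a small neighbourhood of an arbitrary interior point of $\Gamma_T$, where, using $\partial\Omega\in C^{2m-1+\lambda}$, I would flatten $\partial\Omega$ so that each $B_i$ reduces, modulo lower-order Hölder-controlled terms, to $\partial_\nu^i$.

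Next I would invoke the jump relations for the single-layer, double-layer and higher-order parabolic potentials, classical for $m=1$ in \cite[Ch.~1, \S3 and Ch.~5, \S2]{frid}, \cite[Ch.~4, \S\S11--14]{lad} and available for $m\ge1$ in \cite[Ch.~2, \S1]{eid}. Under the hypotheses $u_{j+1}\in C^{2m-j,0,\lambda,\lambda/2}(\overline{\Gamma_T})$ and $\partial\Omega\in C^{2m-1+\lambda}$ these guarantee that $B_iV^{(j)}(u_{j+1})$ has well-defined one-sided limits on $\Gamma_T$ (consistently with the mapping properties \eqref{eq.Vj}) and that the jump across $\Gamma_T$ is produced entirely by the leading spatial singularity of the kernel $B_i^xC_j^y\Phi_m(x-y,t-\tau)$ on the diagonal. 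A structural point I would stress is that the time variable enters only through the Volterra convolution $\int_0^t(\,\cdot\,)(t-\tau)\,d\tau$, which is continuous across $\Gamma_T$ and contributes no jump; this matches the elliptic picture for $(-\Delta)^m$ obtained after the Laplace transform, cf. \eqref{eq.heat.0.F}. Consequently one obtains
\[
\big(B_iV^{(j)}_{\overline{\Gamma},0}(u_{j+1})\big)^-_{|\Gamma_T}-\big(B_iV^{(j)}_{\overline{\Gamma},0}(u_{j+1})\big)^+_{|\Gamma_T}=c_{ij}\,u_{j+1},
\]
with multiplier constants $c_{ij}$ determined by the symbols $\sigma(B_i)(x,\zeta)$ and $\sigma(C_j)(x,\zeta)$ evaluated at $\zeta=\nu(x)$.

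It then remains to show $\sum_j c_{ij}u_{j+1}=u_{i+1}$, i.e. $c_{ij}=\delta_{ij}$. Here I would use that $\{C_0,\dots,C_{2m-1}\}$ was defined precisely as the system dual to $\{B_0,\dots,B_{2m-1}\}$ through the Green formula for $\Delta^m$, so that their symbols are biorthogonal at $\zeta=\nu(x)$. The normalisation is pinned down by the reproducing identity \eqref{eq.Green} itself: for a genuine local solution $w$ of ${\mathcal L}_mw=0$ near $p$ one has $\chi_{\Omega_T}w = I_{\Omega,T_1}(w)+\sum_j V^{(j)}(B_jw)$ (the term $G_{\Omega,T_1}({\mathcal L}_mw)$ vanishing), with the Poisson-type potential $I_{\Omega,T_1}(w)$ smooth—hence jump-free—across the lateral surface $\Gamma_T$; since $B_i(\chi_{\Omega_T}w)$ jumps across $\Gamma_T$ by exactly $B_iw$, the jump of $B_i\sum_jV^{(j)}(B_jw)$ equals $B_iw$. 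Applying this to a family of local solutions whose Cauchy data realise all symbol-level jets at $p$ (e.g. the polynomial solutions of ${\mathcal L}_m=0$) forces $c_{ij}=\delta_{ij}$, because $\{B_j\}$ is a Dirichlet system with independent symbols. Passing from such densities to arbitrary Hölder data $u_{j+1}$ by the continuity of the trace and jump operators in the spaces of \eqref{eq.Vj} then yields \eqref{eq.Wnu}.

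The hard part will be the rigorous treatment of the most singular terms, namely $B_i$ with $i$ close to $2m-1$ acting on the top-order potentials: there the kernel $B_i^xC_j^y\Phi_m$ is hypersingular, the one-sided limits must be understood as finite-part integrals, and one must show that the non-integrable part cancels in the difference while exactly $c_{ij}u_{j+1}$ survives, with no loss of Hölder regularity and no residual jump from the time convolution up to $\tau=t$. Verifying this—together with the fact that the exhaustion underlying \eqref{eq.Vj} lets one pass to the one-sided boundary limits uniformly—is the technical heart of the argument, whereas the localisation and the Green-duality normalisation are essentially bookkeeping. This is exactly the step where the regularity $\partial\Omega\in C^{2m-1+\lambda}$ and the precise Hölder orders of the data $u_{j+1}$ are indispensable.
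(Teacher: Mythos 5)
Your plan hinges on the assertion that each separate potential obeys a jump relation
$\big(B_iV^{(j)}_{\overline{\Gamma},0}(\mu)\big)^-_{|\Gamma_T}-\big(B_iV^{(j)}_{\overline{\Gamma},0}(\mu)\big)^+_{|\Gamma_T}=c_{ij}\,\mu$
with \emph{scalar} constants $c_{ij}$, to be read off from classical references and then normalised to $c_{ij}=\delta_{ij}$ by testing Green's formula on solutions. That assertion is the genuine gap. Since the densities $u_1,\dots,u_{2m}$ in \eqref{eq.Wnu} are independent, the lemma is \emph{equivalent} to the family of statements ``the jump of $B_iV^{(j)}$ is $\delta_{ij}\mu$''; so the scalar-jump claim is not an available input but essentially the whole content of the lemma. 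It also does not follow from leading-singularity analysis: whenever ${\rm ord}\,B_i+{\rm ord}\,C_j>2m-1$ (a hypersingular pair), the difference of the two one-sided finite-part limits is in general a \emph{tangential differential operator} of positive order applied to $\mu$, not a scalar multiple. Already for $m=1$ the one hypersingular pair, the normal derivative of the double layer, has zero jump only by the Lyapunov--Tauber theorem; and in the flat model one computes $\big(\partial_\nu^3S\mu\big)^--\big(\partial_\nu^3S\mu\big)^+=\Delta_T\mu$ for the harmonic single layer $S$, a kernel of exactly the strength that occurs among the $C_j\Phi_m$ once $m\ge 2$ (e.g.\ for the Dirichlet system $(1,\partial_\nu,\dots,\partial_\nu^{2m-1})$). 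Thus the vanishing of all tangential terms is a special property of the Green-dual system $\{C_j\}$ --- a generalised Lyapunov--Tauber theorem --- and must be proved, not quoted. Your normalisation step cannot supply it: Cauchy data of solutions are constrained by the equation, so an identity verified only on such data determines the jump operators only modulo those constraints; testing on jets at a point would settle the matter only if one already knew the jumps were \emph{local} (differential) operators, which is again part of what has to be proved. Finally, $\Gamma$ is only $C^{2m-1+\lambda}$, hence non-analytic, so Cauchy--Kovalevskaya produces no local solutions with prescribed data on $\Gamma_T$, and density of Cauchy data of global polynomial solutions is precisely the kind of approximation theorem the paper says it does not yet have.

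The paper itself handles the lemma by citation, and the mechanism behind the cited results (\cite[Lemma 3]{PuSh12} for $m=1$, \cite[Lemma 2.7]{ShTaLMS} for elliptic Green potentials with general Dirichlet systems) is different from, and much shorter than, your plan: one tests Green's formula not on solutions but on an \emph{arbitrary extension of the given data}. Because $\{B_j\}$ is a Dirichlet system, any independent tuple can be realised as $B_jw=u_{j+1}$ on $\Gamma_T$ for some function $w$ on $\overline{\Omega_T}$ with ${\mathcal L}_mw$ bounded (a Whitney-type extension, or the Poisson operator of $\Delta^{2m}$ as in Lemma \ref{l.ext}); $w$ need not solve ${\mathcal L}_mw=0$. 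Applying formula \eqref{eq.Green} to $w$, localised near a point of $\Gamma_T$, one gets $\chi_{\Omega_T}w=I_{\Omega,0}(w)+G_{\Omega,0}({\mathcal L}_mw)+\sum_jV^{(j)}(B_jw)$ up to terms smooth near that point; the volume potentials $I_{\Omega,0}(w)$ and $G_{\Omega,0}({\mathcal L}_mw)$ have bounded densities and are therefore $C^{2m-1}$ in $x$ across $\Gamma_T$, so they contribute no jump, and the jump of $B_i\sum_jV^{(j)}(u_{j+1})$ must equal the jump of $B_i\big(\chi_{\Omega_T}w\big)$, which is exactly $u_{i+1}$. This single idea disposes of all hypersingular pairs at once, never requires identifying any $c_{ij}$, and works directly for independent H\"older densities; it is the step your outline is missing, and without it the ``technical heart'' you defer is not a technicality but the entire lemma.
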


{\it Proof.} It is similar to the proof of the analogous lemmata for 
the heat Single and Double Layer Potentials 
(see, for instance, \cite[lemma 3]{PuSh12}, \cite[Ch. 3, \S 10, theorem 10.1]{landis} for 
$m=1$ and a different function class or \cite[Lemma 2.7]{ShTaLMS} for elliptic 
potentials).
\hfill $\square$

Using lemma \ref{l.W.jump} and formulas \eqref{eq.G1}, \eqref{eq.BoundPot}, 
we conclude that  $B_j U^-_{|\Gamma_T}= u_{j+1}$ for all $0\leq j \leq 2m-1$,
i.e. the second equation in \eqref{eq.Cauchy} 
is fulfilled for $U^-$. 
Thus, function $ u (x,t)=U^-(x, t)$ satisfies conditions \eqref{eq.heat}, \eqref{eq.Cauchy}. 
The proof is complete. \hfill $\square$

We note that  Theorem \ref{t.sol} is also an analogue
of Theorem by Aizenberg and Kytmanov 
\cite{AKy} describing solvability conditions of the Cauchy problem for the Cauchy--Riemann 
system (cf. also \cite{Sh1} in the Cauchy Problem for Laplace Equation or  
 \cite{Tark36} in the  Cauchy problem for general elliptic systems).

We note also that formula \eqref{eq.sol}, obtained in the proof of Theorem  \ref{t.sol}, gives 
the unique solution to Problem  \ref{pr.1}. Clearly, if we will be able to write  
the extension $F$ of the sum of potentials $G_{\Omega, 0} (f)+
\sum_{j=0}^{2m-1}V^{(j)}_{\overline{\Gamma},0} (u_{j+1})$ 
from $\Omega^+_T$ onto $D_T$ as a series with respect to 
special functions or a limit of parameter depending integrals then we will get  
Carleman's type formula for solutions to Problem \ref{pr.1} (cf. \cite{AKy}). 
However, for the best way for this purpose is to use the Fourier series 
in the framework of the Hilbert space theory, see \cite{VKuSh2022}. Unfortunately, 
this is not a short story because one needs approximation theorems 
in spaces of solutions to the homogeneous polyharmonic heat equation that we are 
not ready to prove right now. Thus we finish 
our paper with a statement extending Theorem \ref{t.sol} to the anisotropic Sobolev spaces, 
leaving the construction of the Carleman's type formulae for the next article.

First of all, we need the following lemma.

\begin{lemma} \label{l.ext}
Let $\partial \Omega \in C^{2m+1}$ and let $\Gamma$ be a relatively open subset 
of $\partial \Omega$ with boundary $\partial \Gamma \in C^{2m+\lambda}$. 
If $u_j \in C^{2m+1-j,0,\lambda,\lambda/2} (\overline {\Gamma_T})$, 
$1\leq j \leq 2m$,  
then there exist  functions 
$\tilde u_j\in  C^{2m+1-j,0,\lambda,\lambda/2} (\partial \Omega_T)$ such that 
$\tilde u_j =u_j $ on $\overline {\Gamma_T}$, $1\leq j \leq 2m$, and a function 
$\tilde u \in C^{2m,1,\lambda,\lambda/2} (\overline{\Omega_T}) $
such that $B_j \tilde u =\tilde u_{j+1} $ on $(\partial \Omega)_T$ 
for all $0\leq j \leq 2m-1$. 
\end{lemma}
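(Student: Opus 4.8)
The plan is to establish the two assertions separately: first extend the Cauchy data from $\overline{\Gamma_T}$ to the whole of $(\partial\Omega)_T$, and then produce a single function on $\overline{\Omega_T}$ carrying the extended data. For the \emph{extension of the data}, I would extend each $u_j$ off $\Gamma$ by a standard Hölder extension. Since $\partial\Gamma\in C^{2m+\lambda}$ and $\partial\Omega\in C^{2m+1}$, I would cover a relative neighbourhood of $\partial\Gamma$ in $\partial\Omega$ by finitely many charts flattening $\Gamma$ onto a piece of a hyperplane, apply in each chart a standard extension operator across the resulting flat boundary (a Whitney or Hestenes reflection), and glue with a subordinate partition of unity, using a plain cut-off away from $\partial\Gamma$. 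The construction touches only the spatial variables, so $t$ enters as a passive parameter and the Hölder-$\lambda/2$ modulus in time is preserved; this yields $\tilde u_j\in C^{2m+1-j,0,\lambda,\lambda/2}((\partial\Omega)_T)$ with $\tilde u_j=u_j$ on $\overline{\Gamma_T}$. This is the routine Hölder extension recorded, for instance, in \cite{lad}, \cite{frid}, \cite{eid}.

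For the \emph{realization of the Cauchy data}, note that, because $\{B_j\}_{j=0}^{2m-1}$ is a Dirichlet system of order $2m-1$ with smooth coefficients, the passage from the Cauchy data $(B_0\tilde u,\dots,B_{2m-1}\tilde u)|_{\partial\Omega}$ to the normal jet $(\partial_\nu^0\tilde u,\dots,\partial_\nu^{2m-1}\tilde u)|_{\partial\Omega}$ is given by an invertible triangular transformation whose inversion involves tangential differentiation along $\partial\Omega$. Solving it expresses the normal data $g_k:=\partial_\nu^k\tilde u|_{\partial\Omega}$ through $\tilde u_1,\dots,\tilde u_{k+1}$ and their tangential derivatives; a count of derivatives gives $g_k\in C^{2m-k,0,\lambda,\lambda/2}((\partial\Omega)_T)$ for $0\le k\le 2m-1$, which is exactly the balance needed (the one extra spatial derivative assumed on the $u_j$ is consumed here). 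Using the signed distance $\rho$ to $\partial\Omega$ and the nearest-point projection $\pi$, both of class $C^{2m+1}$ in a tubular neighbourhood, I would then assemble $\tilde u$ from a cut-off normal expansion with coefficients $g_k$, arranged so that $\partial_\nu^k\tilde u|_{\partial\Omega}=g_k$ and hence $B_j\tilde u=\tilde u_{j+1}$ on $(\partial\Omega)_T$.

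The \emph{main obstacle} is matching the anisotropic regularity of the target class $C^{2m,1,\lambda,\lambda/2}(\overline{\Omega_T})$. The plain normal polynomial $\sum_k \rho^k g_k(\pi)/k!$ does not belong to this class: a derivative of total spatial order $2m$ falling mainly on the tangential directions would require more tangential smoothness of $g_k$ than the available $C^{2m-k,\lambda}$, and, since the data carry only a Hölder-$\lambda/2$ modulus in $t$, the time derivative $\partial_t\tilde u$ demanded by the class cannot be read off the coefficients directly. The spatial loss is repaired by regularizing each $g_k$ at a spatial scale comparable to $\rho$ (a Seeley/Hestenes-type lifting), the vanishing factor $\rho^k$ absorbing the growth of the derivatives of the mollified coefficients, while the required time regularity is built in by taking the data in, and lifting within, the correct anisotropic trace classes. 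Most economically, I would invoke the trace and extension theorems for the anisotropic parabolic Hölder spaces (see \cite{lad}, \cite{eid}), which furnish a bounded right inverse of the Dirichlet-system lateral trace operator and deliver $\tilde u\in C^{2m,1,\lambda,\lambda/2}(\overline{\Omega_T})$ directly. I expect this regularity bookkeeping, rather than the algebra of the Dirichlet system, to be the technical heart of the argument.
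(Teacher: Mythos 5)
Your first half agrees with the paper: the extension of the data $u_j$ across $\partial\Gamma$ is done there by exactly the argument you describe (localization near $\partial\Gamma$, a rectifying diffeomorphism, a standard H\"older extension and a partition of unity, following \cite[Lemma 6.37]{GiTru83}), with $t$ entering as a passive parameter. Your second half, however, takes a genuinely different route. The paper never converts the Dirichlet data into a normal jet and never performs a Whitney/Seeley-type lifting: it solves an auxiliary \emph{well-posed} parabolic initial--boundary value problem for the operator $\partial_t+\Delta^{2m}$ --- of order $4m$ in $x$, chosen precisely so that $\{B_j\}_{j=0}^{2m-1}$ is a complete Dirichlet boundary system for it --- with lateral data $\tilde u_{j+1}$ and with initial datum $\tilde u_0={\mathcal P}_{\Delta^{2m},\Omega}(\oplus_{j}\tilde u_{j+1}(\cdot,0))$ built from the Poisson integral of the elliptic Dirichlet problem for $\Delta^{2m}$ (see \cite{ArCrLi83}), so that the initial and lateral data are compatible at $t=0$. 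In that scheme the extra spatial derivative assumed on the $u_j$ feeds the elliptic/parabolic Schauder theory, not a triangular inversion of the Dirichlet system.

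The genuine gap in your route sits exactly at the point you flag as the technical heart, and your proposed repair does not close it. The class $C^{2m,1,\lambda,\lambda/2}(\overline{\Omega_T})$ requires $\partial_t\tilde u$ to exist and extend continuously to $\overline{\Omega_T}$, while the data are merely H\"older-$\lambda/2$ in $t$. A lifting in which $t$ is a parameter produces no time derivative at all, not even inside $\Omega_T$; and your ``most economical'' fix --- a bounded right inverse of the lateral trace operator in anisotropic parabolic H\"older classes --- cannot exist for the data classes of this lemma. Indeed, the $B_0$-trace of any function of class $C^{2m,1,\lambda,\lambda/2}(\overline{\Omega_T})$ is differentiable in $t$, whereas $u_1\in C^{2m,0,\lambda,\lambda/2}(\overline{\Gamma_T})$ need not be (take $u_1=|t-T/2|^{\lambda/2}$, constant in $x$); so the trace operator does not map onto these data spaces, and the extension theorems of \cite{lad}, \cite{eid} you invoke require parabolically balanced data of type $C^{l,l/2}$, which the present data (spatially $C^{2m+1-j}$, temporally only $C^{\lambda/2}$) are not. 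What overcomes this in the paper is that interior time regularity of $\tilde u$ is \emph{generated by the equation} ($\partial_t\tilde u=-\Delta^{2m}\tilde u$), not lifted from the data; accordingly, the paper's own proof delivers $\tilde u\in C^{2m,1,\lambda,\lambda/2}(\Omega_T)\cap C^{2m-1,0,\lambda,\lambda/2}(\overline{\Omega_T})$ --- note this is weaker than the class displayed in the lemma's statement, and by the trace obstruction above no construction from such data could attain the stated class. Your mollification-at-scale-$\rho$ idea could plausibly be pushed to this weaker conclusion by mollifying in $t$ as well at a scale vanishing with $\rho$ (accepting that $\partial_t\tilde u$ blows up at the lateral boundary), but that bookkeeping, which you defer, would then constitute the entire proof; as written, the proposal does not contain it.
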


\begin{proof} 
We may adopt the standard arguments from 
\cite[Lemma 6.37]{GiTru83} related to isotropic spaces. Indeed, according to it, under 
our assumptions, for any $s\leq 2m$  and any 
 $v\in C^{s,\lambda} (\overline \Gamma)$  there is 
$\tilde v_j\in  C^{s,\lambda} (\partial \Omega)$ such that $v =v_0 $ on 
$\overline {\Gamma}$. 
The construction of the extension involves the rectifying diffeomorphism of $\partial  \Gamma$ 
and a suitable partition of unity of a neighbourhood of $\partial  \Gamma$, only. 
Thus, we conclude there are functions 
$\tilde u_j\in  C^{2m-j+1,0,\lambda,\lambda/2} (\partial \Omega_T)$ 
such that $\tilde u_j =u_j $ on $\overline {\Gamma_T}$, $1\leq j \leq 2m$. 

Next, we use the existence of the Poisson kernel $P_{\Delta^{2m},\Omega} (x,y)$ for the 
Dirichlet problem related to the operator $\Delta^{2m}$, see \cite{ArCrLi83}. It is known that 
the problem is well-posed over the scale of H\"older spaces in $\Omega$. Namely, 
if $\partial \Omega\in C^{s+1,\lambda}$, $s\geq 2m-1$, then for each  
 $\oplus_{j=0}^{2m-1} v_j \in C^{s-j,\lambda} (\partial \Omega)$ the integral 
\begin{equation*}
v(x) = {\mathcal P}_{\Delta^{2m},\Omega}  (\oplus_{j=0}^{2m-1} v_j) (x) 
= \int_{\partial\Omega} \Big(\sum_{j=0}^{2m-1} (B_j (y) P_{\Delta^{2m},\Omega} ) (x,y) v_j(y) 
\Big) ds (y)
\end{equation*}
belongs to $C^{s,\lambda} (\overline \Omega)$ and satisfies
$\Delta ^{2m} v =0 $ in $\Omega$ and $B_jv=v_j$  on 
$ \partial\Omega$ for all $0\leq j\leq 2m-1$. 

Now,  we set 
\begin{equation*}
\tilde u_0  (x)= {\mathcal P}_{\Delta^{2m},\Omega}   (\oplus_{j=0}^{2m-1} \tilde u_{j+1}) 
(\cdot,0) 
(x) \in C^{2m-1,\lambda} (\overline \Omega) \cap C^{2m,\lambda} (\Omega).
\end{equation*}

Now,  we may take as $\tilde u  (x,t) \in C^{2m,1,\lambda,\lambda/2} (\Omega_T) \cap C^{2m-1,
0,\lambda,\lambda/2}   (\overline{\Omega_T)}$ the unique solution to the parabolic initial  
boundary problem 
\begin{equation*}
\left\{
\begin{array}{lll}
\partial _t \tilde u  (x,t) + \Delta^{2m} \tilde u  (x,t) = 0 & \mbox{ in } & \Omega_T ,\\ 
\oplus_{j=0}^{2m-1}B_j \tilde u  (x,t) = 
\oplus_{j=0}^{2m-1} \tilde u _{j+1} (x,t)   & \mbox{ on } & (\partial \Omega)_T ,\\
\tilde u  (x,0) = \tilde u _0 (x)  & \mbox{ on } & \overline \Omega ,\\
\end{array}
\right.
\end{equation*}
see, for instance, \cite[Ch.5, \S 6]{lad} for $m=1$ or \cite[Ch. 3, \S 1]{eid} for $m\geq1$. 
But of course, there are other possibilities 
to choose a function $\tilde u$ with the desired properties. 
\end{proof}  

Under the assumptions of Lemma \ref{l.ext}, we set 
\begin{equation} \label{eq.tildeF}
\tilde {\mathcal F}=G_{\Omega, 0} (f)+\sum_{j=0}^{2m-1}
V^{(j)}_{\partial \Omega,0} (\tilde u_{j+1})  + 
I_{\Omega,0} (\tilde u).
\end{equation}

\begin{corollary} \label{c.sol.n}
Let $\lambda\in (0,1)$, $\partial \Omega$ belong  to $C^{2m+1+\lambda}$ and let  
$\Gamma$ be a relatively open connected subset of $\partial \Omega$ 
with boundary $\partial \Gamma \in C^{2m+\lambda}$. If 
$f\in C^{0,0,\lambda,\lambda/2}
(\overline{\Omega_T})$,   $u_j\in C^{2m-j+1,0,\lambda,\lambda/2}(\overline{\Gamma_T})$, 
then Problem \eqref{eq.heat}, \eqref{eq.Cauchy}  
is solvable in the space $C^{2m,1,\lambda,\lambda/2}(\Omega_T)\cap C^{2m-1,0,
\lambda,\lambda/2}(\Omega_T \cup \Gamma_T) \cap H^{2m,1}(\Omega_T)$ if and only if 
there is a function $\tilde F\in C^\infty (D_T) \cap H^{2m,1}(D_T)$ 
satisfying the following two conditions:
1') ${\mathcal L} \tilde F=0$ in $D_T$, 
2') $\tilde F=\tilde{\mathcal F}$ in $\Omega^+_T$.
\end{corollary}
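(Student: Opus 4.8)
The plan is to \emph{reduce} Corollary \ref{c.sol.n} to the already established Solvability criterion (Theorem \ref{t.sol}) instead of repeating its proof, and then to account separately for the anisotropic Sobolev membership, which is the only genuinely new feature. Note first that the hypotheses here are strictly stronger than those of Theorem \ref{t.sol}: the data $u_j$ are one order more regular and $\partial\Omega,\partial\Gamma$ are smoother, so Theorem \ref{t.sol} applies to the H\"older part of the assertion and, in addition, Lemma \ref{l.ext} becomes applicable and furnishes the full--boundary extensions $\tilde u_j\in C^{2m+1-j,0,\lambda,\lambda/2}((\partial\Omega)_T)$ of $u_j$ together with an extension $\tilde u\in C^{2m,1,\lambda,\lambda/2}(\overline{\Omega_T})$ satisfying $B_j\tilde u=\tilde u_{j+1}$ on $(\partial\Omega)_T$, which enter the definition \eqref{eq.tildeF} of $\tilde{\mathcal F}$.

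First I would simplify $\tilde{\mathcal F}$ by applying the Green formula of Lemma \ref{l.Green} to $\tilde u$ on $\Omega_{0,T}$. Since $B_j\tilde u=\tilde u_{j+1}$, the layer and Poisson terms in \eqref{eq.tildeF} combine with the Green identity to give
\begin{equation*}
\tilde{\mathcal F}=G_{\Omega,0}(f-{\mathcal L}_m\tilde u)+\chi_{\Omega_T}\tilde u ,
\end{equation*}
so that $\tilde{\mathcal F}^+=G^+_{\Omega,0}(f-{\mathcal L}_m\tilde u)$ on $\Omega^+_T$ and $\tilde{\mathcal F}^-=G^-_{\Omega,0}(f-{\mathcal L}_m\tilde u)+\tilde u$ on $\Omega_T$. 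As $f-{\mathcal L}_m\tilde u\in C^{0,0,\lambda,\lambda/2}(\overline{\Omega_T})$, property \eqref{eq.G1} shows that each piece lies in $C^{2m,1,\lambda,\lambda/2}(\overline{\Omega^\pm_T})\subset H^{2m,1}(\Omega^\pm_T)$.

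The key step is the identity
\begin{equation*}
W:=\tilde{\mathcal F}-{\mathcal F}=\sum_{j=0}^{2m-1}V^{(j)}_{\partial\Omega\setminus\Gamma,0}(\tilde u_{j+1})+I_{\Omega,0}(\tilde u),
\end{equation*}
obtained by subtracting ${\mathcal F}$ of \eqref{eq.FF} and using $\tilde u_{j+1}=u_{j+1}$ on $\overline{\Gamma_T}$. The integration surface $\partial\Omega\setminus\Gamma$ lies on $\partial D$, hence off the open cylinder $D_T$, so these layer potentials are $C^\infty$ in $D_T$ and annihilated by ${\mathcal L}_m$; likewise the Poisson potential $I_{\Omega,0}(\tilde u)$ is smooth and ${\mathcal L}_m$--null for $t>0$. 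Thus $W\in C^\infty(D_T)$ with ${\mathcal L}_mW=0$ in $D_T$, and, crucially, $W$ has \emph{no jump} across $\Gamma_T$. Setting $F:=\tilde F-W$ then transforms conditions 1') and 2') for $\tilde F$ into conditions 1) and 2) of Theorem \ref{t.sol} for $F$ (indeed $F={\mathcal F}$ on $\Omega^+_T$, ${\mathcal L}_mF=0$ in $D_T$, and $F\in C^\infty(D_T)$ if and only if $\tilde F\in C^\infty(D_T)$). Hence the existence of $\tilde F\in C^\infty(D_T)$ with 1') and 2') is equivalent to solvability of \eqref{eq.heat}, \eqref{eq.Cauchy} in $C^{2m,1,\lambda,\lambda/2}(\Omega_T)\cap C^{2m-1,0,\lambda,\lambda/2}(\Omega_T\cup\Gamma_T)$, the solution being $u=({\mathcal F}-F)^-=(\tilde{\mathcal F}-\tilde F)^-$.

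It remains to match the $H^{2m,1}$ parts, which I would treat through the representation $u=G^-_{\Omega,0}(f-{\mathcal L}_m\tilde u)+\tilde u-\tilde F|_{\Omega_T}$ read off from the previous paragraph. For sufficiency, $\tilde F\in H^{2m,1}(D_T)$ is given, $\tilde u\in H^{2m,1}(\Omega_T)$ by Lemma \ref{l.ext}, and the volume term lies in $H^{2m,1}(\Omega_T)$ by \eqref{eq.G1}, whence $u\in H^{2m,1}(\Omega_T)$. For necessity I would take $\tilde F=\tilde{\mathcal F}-\chi_{\Omega_T}u=F+W$, which is automatically in $C^\infty(D_T)$ (both $F$ from Theorem \ref{t.sol} and $W$ are), is in $H^{2m,1}(\Omega^+_T)$ because $\tilde F=\tilde{\mathcal F}^+$ there, and is in $H^{2m,1}(\Omega_T)$ because $\tilde F=G^-_{\Omega,0}(f-{\mathcal L}_m\tilde u)+\tilde u-u$ with $u\in H^{2m,1}(\Omega_T)$ by hypothesis; since $\tilde F$ is smooth across the measure--zero interface $\Gamma_T$, its piecewise $H^{2m,1}$ regularity upgrades to $\tilde F\in H^{2m,1}(D_T)$. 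The main obstacle I anticipate is the verification of the support/smoothing claims underlying $W\in C^\infty(D_T)$ and the careful $H^{2m,1}$ bookkeeping of the volume and Poisson potentials near $t=0$, for which I would rely on the parabolic H\"older estimates behind \eqref{eq.G1}, \eqref{eq.G2} and on the hypoellipticity of ${\mathcal L}_m$ together with Lemma \ref{l.W.jump}.
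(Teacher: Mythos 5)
Your proposal is correct and follows essentially the same route as the paper's proof: the Green-formula reduction $\tilde{\mathcal F}=G_{\Omega,0}(f-{\mathcal L}_m\tilde u)+\chi_{\Omega_T}\tilde u$, the difference identity \eqref{eq.difference} showing $\tilde{\mathcal F}-{\mathcal F}$ is a smooth null-solution in $D_T$, the resulting equivalence with conditions 1), 2) of Theorem \ref{t.sol}, and the patching of piecewise $H^{2m,1}$ regularity across $\Gamma_T$ via smoothness of $\tilde F$ in $D_T$. Your write-up merely makes explicit (via $W$ and $F=\tilde F-W$) what the paper leaves implicit.
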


\begin{proof} First of all, we note that, by Green formula (\ref{eq.Green}), 
we have  $\tilde{\mathcal F} = G_{\Omega, 0} (f-{\mathcal L}\tilde u) + \chi_{\Omega_T} \tilde u$ and 
then $\tilde{\mathcal F} \in C^{2m,1,\lambda,\lambda/2} (\overline{\Omega^\pm_T})$ 
because of (\ref{eq.G1}). On the other hand, 
\begin{equation} \label{eq.difference}
\tilde{\mathcal F} -{\mathcal F} = 
\sum_{j=0}^{2m-1}
V^{(j)}_{\partial \Omega \setminus \Gamma ,0} (\tilde u_{j+1})
 + I_{\Omega,0} (\tilde u).
\end{equation}
This means that the function $\tilde{\mathcal F} -{\mathcal F}$ satisfies the 
${\mathcal L} (\tilde{\mathcal F} -{\mathcal F}) =0$ in $D_T$ and 
hence the function ${\mathcal F}$ extends to $D_T$ as a solution of the 
heat equation if and only if  function $\tilde {\mathcal F}$ extends to $D_T$
as a solution of the polyharmonic heat equation, too. 

Let Problem \eqref{eq.heat}, \eqref{eq.Cauchy} be solvable in the space 
$C^{2m,1,\lambda,\lambda/2}(\Omega_T)\cap C^{2m-1,0,
\lambda,\lambda/2}(\Omega_T \cup \Gamma_T) \cap H^{2m,1}(\Omega_T)$. Then 
formulas \eqref{eq.FF} and \eqref{eq.difference} imply 
\begin{equation*}
\tilde F = \tilde{\mathcal F} -\chi_{\Omega_T} u \in H^{2m,1} (\Omega^\pm_T)
\mbox{ and } {\mathcal L} \tilde F =0 \mbox{ in } D_T.
\end{equation*}
Now, as $\tilde F \in H^{2m,1} (\Omega^\pm_T)\cap C^\infty (D_T)$ (see 
\cite[Ch. VI, \S 1, Theorem 1]{mih76}) we conclude that $\tilde F \in H^{2m,1} (D_T)$, 
i.e. conditions 1'), 2') of the corollary are fulfilled.

If conditions 1'), 2') of the corollary hold true then 
conditions 1), 2) of Theorem \ref{t.sol} are fulfilled, too. 
Moreover, formulas \eqref{eq.sol} and 
\eqref{eq.difference} imply that in $D_T$ we have
\begin{equation} \label{eq.sol.tilde}
U= {\mathcal F} - F = \tilde {\mathcal F} - \tilde F \in 
H^{2m,1} (\Omega^\pm_T)
\end{equation}
and the $U^-$ is the solution to Problem \ref{pr.1} in the space  
$C^{2m,1,\lambda,\lambda/2}(\Omega_T)\cap C^{2m-1,0,
\lambda,\lambda/2}(\Omega_T \cup \Gamma_T) \cap H^{2m,1} (\Omega^\pm_T)$ 
by Theorem \ref{t.sol}. 
\end{proof}

\smallskip

\textit{Acknowledgments\,}
The second author was supported by the Russian Science Foundation,  
grant N 20-11-20117.


\begin{thebibliography}{9}
\bibitem{PuSh12}
{Puzyrev, R.E., Shlapunov, A.A.}, 
{On an ill-Posed problem for the heat equation}, {\it J. Sib. Fed. Univ., Math. and 
Physics}, {\bf 5:3} (2012), 337--348.
\bibitem{PuSh15}
{Puzyrev, R.E., Shlapunov, A.A.}, 
{On a mixed problem for the parabolic Lam\'e type operator}, 
{\it J. Inv. Ill-posed Problems}, V. 23:6 (2015), 555--570.  
\bibitem{MMT17}
 Makhmudov K.O.,  Makhmudov O.I.,  Tarkhanov N.N., 
A Nonstandard Cauchy Problem for the Heat Equation, {\it 
Math. Notes}, 102:2 (2017), 250--260.
\bibitem{KuSh2019}
Kurilenko I.A., Shlapunov A.A., 
{On Carleman-type Formulas for Solutions to the Heat Equation}, 
{\it Journal of Siberian Federal University, Math. and Phys.},  12:4 (2019), 421--433.
\bibitem{VKuSh2022}
Vilkov P.Yu., Kurilenko I.A., Shlapunov A.A., 
Approximation of solutions to parabolic Lam\'e type operators in cylinder domains and 
Carleman's  formulas for them,  {\it Siberian Math. J.}, V. 63, n.~6, 2022, pp. 1049--1059.
\bibitem{Lv1} {Lavrent'ev, M.M.}, 
{ On the Cauchy problem for Laplace equation}, {\it 
Izvestia AN SSSR. Ser. matem.}, (1956), N.~20, 819--842. 
\bibitem{LRSh} {Lavrent'ev, M.M., Romanov, V.G., Shishatskii, S.P.}, 
 {Ill-posed problems of mathematical physics and analysis},  M., Nauka,  1980. 
\bibitem{TiAr} {Tihonov, A.N., Arsenin, V.Ya.},  
{Methods of solving ill-posed problems}, M., Nauka, 1986. 
\bibitem{A}{Aizenberg, L.A.},
{Carleman formulas in complex analysis. First applications},  
 Novosibirsk, Nauka, 1990 (Russian). English transl.  in
Kluwer Ac. Publ., Dordrecht,  1993.
\bibitem{AKy} {Aizenberg L.A., Kytmanov, A.M.}, 
{On the possibility of holomorphic continuation to a domain of functions given on a part of 
its boundary},  {\it Matem. Sb.}, {\bf 182}(1991), N.~5, 490--597. 
\bibitem{Sh1}
{Shlapunov, A.A.}, {On the Cauchy Problem for the Laplace Equation}, 
{\it Siberian Math. J.}, {\bf 33}(1992), N.~3, 205--215. 
\bibitem{ShTaLMS} {Shlapunov, A.A., Tarkhanov, N.}, 
{Bases with double orthogonality in the Cauchy problem for systems with 
injective symbols}, {\it Proc. London. Math. Soc.}, {\bf 71}(1995),
n.~1, 1--54.
\bibitem{Tark36} {Tarkhanov, N.}, 
{The Cauchy problem for solutions of elliptic equations}, Berlin: Akademie-Verlag, 1995. 
\bibitem{eid} Eidel'man S.D., {Parabolic equations}, 
Partial differential equations -- 6, Itogi Nauki i Tekhniki. Ser. Sovrem. Probl. Mat. Fund. 
Napr., 63, VINITI, Moscow, 1990, 201--313. 
\bibitem{rep+eid}
Repnikov V.D., Eidel'man S.D., Necessary and sufficient conditions for establishing a 
solution to the Cauchy problem, {\it Dokl. AN SSSR}, 1966, V. 167, n.~2, 298--301.
\bibitem{Vla} Vladimirov V.S., Equations the mathematical physics, 
Nauka, Moscow, 1988. 
\bibitem{AV64}
Agranovich M.S., Vishik M.I., Elliptic problems with a parameter and parabolic problems of 
general type, {\it Uspekhi Mat. Nauk}, 19:3(117) (1964),  53--161.
\bibitem{ChLyTa}
Chelkh  W.,  Ly I.,  Tarkhanov N.N., 
A remark on the Laplace transform, {\it Siberian Math. J.}, 61:4 (2020), 755--762.
\bibitem{frid}
{Friedman, A.}, {Partial differential equations of parabolic type}, Englewood Cliffs, NJ, 
Prentice-Hall, Inc., 1964.
\bibitem{lad}
{Ladyzhenskaya, O.A., Solonnikov V.A., Ural'tseva N.N.},
{Linear and quasilinear equations of parabolic type}, M., Nauka,  1967. 
\bibitem{Kry96} {Krylov, N.V.},
{Lectures on elliptic and parabolic equations in H\"older spaces}. Graduate Studies in 
Math. V. 12,  AMS, Providence, Rhode Island, 1996.
\bibitem{Kry08} {Krylov, N.V.},
{Lectures on elliptic and parabolic equations in Sobolev spaces}. Graduate Studies in 
Math. V. 96,  AMS, Providence, Rhode Island, 2008.
\bibitem{Lion69}
{Lions, J.-L.}, Quelques m\'{e}thodes de r\'{e}solution des probl\`{e}mes aux 
limites non lin\'{e}are, Dunod/Gauthier-Villars, Paris, 1969, 588~pp.
\bibitem{Hd23} {Hadamard, J.},   
{Lectures on Cauchy`s problem in linear partial differential equations}, 
Yale Univ. Press, New Haven-London, 1923.
\bibitem{mih76}
{Mikhailov, V.P.}, 
Partial differential equations, M., Nauka, 1976. 
\bibitem{landis}
{Landis, E.M.}, {Second order equations of elliptic and parabolic types}, 
M., Nauka, 1971. 
\bibitem{svesh}
{Sveshnikov, A.G., Bogolyubov, A.N.,  
Kravtsov, V.V.}, Lectures on mathematical physics, M., Nauka, 2004.
\bibitem{Tark37} {Tarkhanov, N.N.}, 
 {Complexes of differential operators},
 Kluwer Ac. Publ., Dordrecht, 1995. 
\bibitem{GiTru83} 
{Gilbarg, D., Trudinger, N.}, {Elliptic partial differential 
equations of second order}, Berlin, Springer-Verlag, 1983.
\bibitem{ArCrLi83}
{Aronszajn, N., Creese, T., Lipkin, L.}, {Polyharmonic functions}. Clarendon Press, Oxford, 
1983.

\end{thebibliography}
\end{document}